\newtheorem{theorem}{Theorem}
\newtheorem{proposition}[theorem]{Proposition}
\newtheorem{corollary}[theorem]{Corollary}
\begin{document}

\title{On powers of interval graphs and their orders}
\author{Florent Foucaud\footnote{\noindent LIMOS - CNRS UMR 6158, Universit\'e Blaise Pascal, Clermont-Ferrand (France). florent.foucaud@gmail.com}
\and Reza Naserasr\footnote{\noindent LIAFA - CNRS UMR 7089, Université Paris Diderot - Paris 7 (France). reza@lri.fr}
\and Aline Parreau\footnote{\noindent LIRIS - CNRS UMR 5205, Université Lyon 1 (France). aline.parreau@univ-lyon1.fr}
\and Petru Valicov\footnote{\noindent LIF - CNRS UMR 7279, Université d'Aix-Marseille (France). petru.valicov@lif.univ-mrs.fr}}

\maketitle

\vspace{-5mm}
\noindent\fbox{
  \begin{minipage}{\textwidth}
    {\bf Note to readers: it has come to our attention that Theorem~\ref{thm:main}, the main result of this note, follows from earlier results of Agnarsson, Damaschke and Halld\'orsson~\cite{ADH}.}
  \end{minipage}
}

\begin{abstract}
It was proved by Raychaudhuri in 1987 that if a graph power $G^{k-1}$ is an interval graph, then so is the next power $G^k$. 
This result was extended to $m$-trapezoid graphs by Flotow in 1995. 
We extend the statement for interval graphs by showing that any interval representation of $G^{k-1}$ can be extended to an interval representation of $G^k$ that induces the same left endpoint and right endpoint orders. The same holds for unit interval graphs. We also show that a similar fact does not hold for trapezoid graphs.
\end{abstract}

\section{Introduction}

An interval graph is a graph whose vertices correspond to intervals of the real line, and such that two vertices are adjacent if and only if the corresponding intervals intersect. More generally, given a set $S$ of geometric objects, the \emph{intersection graph} $G$ of $S$ is the graph with vertex set $S$ such that two vertices are adjacent if and only if they have a nonempty intersection. We say that $S$ is a \emph{geometric representation} of $G$. Let $m\geq 0$ be an integer and consider $m+1$ parallel horizontal lines $L_1,\ldots,L_{m+1}$ indexed from bottom to top. An \emph{$m$-trapezoid} on $L_1,\ldots,L_{m+1}$ is determined by a set of $m+1$ intervals $I_1,\ldots,I_{m+1}$ with $I_i\subset L_i$ for each $i\in\{1,\ldots,m+1\}$. This $m$-trapezoid consists of the polygon with corners $\ell(I_1),\ldots,\ell(I_{m+1}),r(I_{m+1}),\ldots,r(I_1),\ell(I_1)$ connected in this order (where $\ell(I_i)$ and $r(I_i)$ denote the left an right endpoint of interval $I_i$, respectively). An \emph{$m$-trapezoid graph}~\cite{F95} is the intersection graph of a set of $m$-trapezoids defined on the same $m+1$ lines. Note that $0$-trapezoid graphs coincide with interval graphs, and $1$-trapezoid graphs are simply called trapezoid graphs (and were introduced in~\cite{DCP88}). An interval representation is \emph{proper} if no interval properly contains any other interval, and it is \emph{unit} if all intervals have unit length. A graph is a unit (proper, respectively) interval graph if it is the intersection graph of a unit (proper, respectively) interval representation. It is well-known that an interval graph is unit if and only if it is proper~\cite{R69}.

An $m$-trapezoid representation of an $m$-trapezoid graph $G$ naturally induces $2(m+1)$ orders for $V(G)$ corresponding to the orders of the left and right endpoints of the intervals of each line $L_i$\footnote{Note that two elements of $V(G)$ can be equal for some orders if they share a common left or right endpoint.}. We denote by $\leqslant_L^i$ the order corresponding to the left endpoints of the intervals on line $L_i$, and by $\leqslant_R^i$ the order corresponding to the right endpoints of the intervals on line $L_i$. If $m=0$ (i.e. we consider an interval representation of an interval graph), we simply denote these two orders $\leqslant_L$ and $\leqslant_R$.

Given a graph $G$ and an integer $k\geq 1$, the $k$th \emph{power} $G^k$ of $G$ is the graph with vertex set $V(G)$ where two vertices are adjacent in $G^k$ if and only if they are at distance at most~$k$ in $G$.

Raychaudhuri~\cite{R87} proved that for any graph $G$ and any integer $k\geq 2$, if $G^{k-1}$ is an interval graph, then so is $G^k$. Raychaudhuri also proved the same statement for unit interval graphs. Flotow~\cite{F95} extended Raychaudhuri's result as follows: for any $k\geq 2$ and every $m\geq 0$, if $G^{k-1}$ is an $m$-trapezoid graph, then $G^k$ is an $m$-trapezoid graph. Flotow also proved that the same statement is true for co-comparability graphs~\cite{F95}, and studied similar question for circular-arc graphs~\cite{F96}. Note that a similar result does not hold for the related class of permutation graphs (which are exactly the graphs that are both comparability and co-comparability graphs), since the path $P_7$ is a permutation graph, but its square $P_7^2$ is not (indeed it is not a comparability graph). Similarly this does not hold for chordal graphs (see~\cite{LS83} and~\cite{R87} for some discussion and related results).

Raychaudhuri's proof for (unit) interval graphs relies on classic characterizations of these graph classes in terms of forbidden structures, and he does not consider the interval representations of the graphs in question. In contrast, in Flotow's proof, an $m$-trapezoid representation of $G^{k-1}$ is extended in an inductive process to obtain an $m$-trapezoid representation of $G^{k}$. However, Flotow's proof does not yield any conclusion about the \emph{orders} induced by the two considered $m$-trapezoid representations of $G^{k-1}$ and $G^k$. We prove the following strengthening of Raychaudhuri's results from~\cite{R87}, providing a shorter proof for them.

\begin{theorem}\label{thm:main}
Let $G$ be a graph and $k\geq 2$ an integer such that $G^{k-1}$ is an interval graph. Given any interval representation $R$ of $G^{k-1}$, $R$ can be extended to an interval representation $R'$ of $G^k$ such that $R$ and $R'$ induce the same left and right endpoint orders.
\end{theorem}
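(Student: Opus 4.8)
The plan is to leave all left endpoints of $R$ untouched and to build $R'$ by extending each interval only to the right: then $\leqslant_L$ is preserved automatically, and the whole problem reduces to choosing for each vertex $u$ a new right endpoint $r'_u\geq r_u$ so that $u\mapsto r'_u$ is a monotone relabelling of the $r_u$'s (preserving ties, so that $\leqslant_R$ is unchanged) and so that the extended intervals realise $G^k$. Extending to the right never destroys an existing intersection, and any pair with $I_u\cap I_v\neq\emptyset$ in $R$ is already at $G$-distance at most $k-1\leq k$; hence the only constraint comes from pairs whose intervals are disjoint in $R$, i.e.\ pairs with $d_G(u,v)\geq k$ (they are non-adjacent in $G^{k-1}$). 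If $I_u$ lies entirely left of $I_v$, the extended intervals meet iff $r'_u\geq\ell_v$, so I need $r'_u\geq\ell_v$ exactly for the vertices $v$ with $d_G(u,v)=k$ lying to the right of $I_u$, and $r'_u<\ell_w$ for the vertices $w$ with $d_G(u,w)>k$ lying to the right of $I_u$. Write $\alpha_u$ for the maximum of $\{r_u\}$ together with the former $\ell_v$'s, and $\beta_u$ for the minimum of the latter $\ell_w$'s (or $+\infty$ if there are none); I must place $r'_u$ in $[\alpha_u,\beta_u)$.

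The crux is a lemma about the fixed representation $R$ of $G^{k-1}$: if $d_G(a,b)=k$ and $I_a$ lies entirely left of $I_b$, then (1) any vertex $c$ with $r_a<\ell_c\leq\ell_b$ has $d_G(a,c)\leq k$, and (2) any vertex $c$ with $r_a\leq r_c<\ell_b$ has $d_G(c,b)\leq k$. I would prove this by taking a shortest $a$--$b$ path $a=x_0,x_1,\dots,x_k=b$ in $G$: since $k\geq 2$, every internal vertex $x_i$ with $1\leq i\leq k-1$ is within distance $k-1$ of both $a$ and $b$, hence adjacent in $G^{k-1}$ to both, which forces $\ell_{x_i}\leq r_a$ and $r_{x_i}\geq\ell_b$. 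For (1), $\ell_{x_1}\leq r_a<\ell_c\leq\ell_b\leq r_{x_1}$ puts $\ell_c$ in $I_{x_1}\cap I_c$, so $x_1\sim c$ in $G^{k-1}$ and $d_G(a,c)\leq d_G(a,x_1)+d_G(x_1,c)\leq 1+(k-1)=k$; for (2), $\ell_{x_{k-1}}\leq r_a\leq r_c$ and $\ell_c\leq r_c<\ell_b\leq r_{x_{k-1}}$ make $I_{x_{k-1}}$ and $I_c$ meet, so $x_{k-1}\sim c$ and $d_G(c,b)\leq d_G(c,x_{k-1})+d_G(x_{k-1},b)\leq (k-1)+1=k$. (Routing through the neighbours $x_1$, resp.\ $x_{k-1}$, of the endpoints of the path is precisely what keeps the length down to $k$; an arbitrary $x_i$ would only give $2k-2$.)

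From the lemma I extract two facts. First, part~(1) applied with $a=u$, $b$ realising $\alpha_u$ and $c$ realising $\beta_u$ forbids $\beta_u\leq\alpha_u$, so $[\alpha_u,\beta_u)\neq\emptyset$ (the cases $\alpha_u=r_u$ and $\beta_u=+\infty$ being trivial). Second, part~(2) gives monotonicity: if $r_u\leq r_{u'}$ then every $w$ with $d_G(u',w)>k$ lying to the right of $I_{u'}$ also has $d_G(u,w)>k$ and lies to the right of $I_u$, so $\beta_u\leq\beta_{u'}$; and a vertex $b$ realising $\alpha_u$ either already satisfies $r_{u'}\geq\ell_b$ or, by part~(2), has $d_G(u',b)=k$, whence $\alpha_u\leq\alpha_{u'}$. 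In particular $\alpha$ and $\beta$ are constant on each class of vertices sharing a common right endpoint in $R$, and are non-decreasing across these classes.

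To conclude, list the distinct right-endpoint values of $R$ as $t_1<\dots<t_m$ with classes $C_1,\dots,C_m$ and common admissible intervals $[\alpha^{(i)},\beta^{(i)})$; since $\alpha^{(i)}<\beta^{(i)}$ and the two sequences are non-decreasing, a short greedy argument picks reals $s_1<\dots<s_m$ with $s_i\in[\alpha^{(i)},\beta^{(i)})$ for every $i$. Setting $\ell'_v:=\ell_v$ and $r'_v:=s_i$ for $v\in C_i$ yields a representation $R'$ with $I_v\subseteq I'_v$ for all $v$, the same left-endpoint order (literally unchanged) and right-endpoint order (strictly increasing across classes, constant within), and — by the defining property of $[\alpha^{(i)},\beta^{(i)})$ — exactly the adjacencies of $G^k$. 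The main obstacle is the lemma, and inside it the observation that a shortest path of length precisely $k$ forces every internal vertex to straddle the gap between $I_a$ and $I_b$, together with the trick of routing through $x_1$ or $x_{k-1}$ to recover the factor one would otherwise lose; everything after that is bookkeeping about preorders and ties.
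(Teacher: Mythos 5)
Your proposal is correct and follows essentially the same strategy as the paper's proof: keep all left endpoints, extend each interval rightward to just past the left endpoint of its farthest (rightmost-starting) distance-$k$ neighbour, and use an internal vertex of a shortest path of length $k$ — which must span the gap between the two separated intervals — to verify both the order preservation and the exactness of the resulting representation. Your explicit feasibility window $[\alpha_u,\beta_u)$ with the monotonicity and greedy selection is just a more systematized packaging of the paper's direct choice of $r_k(x)$ with its tie-breaking rule.
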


\begin{corollary}\label{cor:unit}
Let $G$ be a graph and $k\geq 2$ an integer such that $G^{k-1}$ is a proper interval graph. Given any proper (respectively unit) interval representation $R$ of $G^{k-1}$, $R$ can be extended to a proper (respespectively unit) interval representation $R'$ of $G^k$ such that $R$ and $R'$ induce the same left and right endpoint orders.
\end{corollary}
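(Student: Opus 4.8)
The plan is to derive Corollary~\ref{cor:unit} from Theorem~\ref{thm:main} together with Roberts' equivalence between proper and unit interval graphs~\cite{R69}; almost nothing new has to be proved. The observation that drives everything is an elementary characterisation of properness: an interval representation $R$ is \emph{proper} if and only if its two induced orders $\leqslant_L$ and $\leqslant_R$ coincide (as total preorders on the vertex set). Indeed, if $R$ is proper then $\ell_R(u)<\ell_R(v)$ must imply $r_R(u)<r_R(v)$, since $r_R(u)\geq r_R(v)$ would give $I(v)\subseteq I(u)$ properly, and $\ell_R(u)=\ell_R(v)$ forces $I(u)=I(v)$ and hence $r_R(u)=r_R(v)$; conversely, if $\leqslant_L$ and $\leqslant_R$ are equal, then a proper containment $I(u)\subsetneq I(v)$ would make $(u,v)$ compare strictly one way for left endpoints and the other way (or with equality) for right endpoints, which is impossible. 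So a proper representation carries a single relevant order, say $\sigma$.

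Granting this, the proper case is immediate: given a proper representation $R$ of $G^{k-1}$, set $\sigma=\,\leqslant_L^R=\,\leqslant_R^R$ and apply Theorem~\ref{thm:main} (applicable since a proper interval graph is an interval graph) to obtain an interval representation $R'$ of $G^k$ that extends $R$ with $\leqslant_L^{R'}=\,\leqslant_L^R$ and $\leqslant_R^{R'}=\,\leqslant_R^R$. Then $\leqslant_L^{R'}$ and $\leqslant_R^{R'}$ are both $\sigma$, hence equal, so $R'$ is proper by the characterisation above; and by construction it extends $R$ and induces the same orders. For the unit case I would first run exactly this argument: a unit representation $R$ of $G^{k-1}$ is proper, with order $\sigma$, so $G^k$ has a proper representation whose left and right endpoint orders are both $\sigma$. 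It then remains to pass from that proper representation of $G^k$ to a unit one that still induces $\sigma$ — and producing a unit representation from a proper one while keeping the vertex order is precisely what the classical proof of Roberts' theorem achieves (list the vertices along $\sigma$ and assign unit intervals $[b_v,b_v+1]$ with the $b_v$ non-decreasing along $\sigma$ and spaced to realise exactly the edges; both endpoint orders then equal $\sigma$). This gives a unit representation of $G^k$ inducing the same left and right endpoint orders as the original unit representation $R$. (For the unit case, ``extends'' should be read as allowing a global rescaling, or simply as the order-preservation statement, since two distinct unit representations cannot be nested.)

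The one step I would flag as the crux is this last transfer: one has to know that the order $\sigma$ coming from a proper representation of $G^k$ is compatible with some unit representation of $G^k$. This is true but not a throwaway remark — it is exactly the assertion that the vertex orders realised by proper representations and by unit representations of a proper interval graph coincide, which is the substance of Roberts' characterisation~\cite{R69} and should be cited as such. Everything else is the bookkeeping above, and in particular the proper half of the corollary needs nothing beyond Theorem~\ref{thm:main} and the preorder characterisation of properness.
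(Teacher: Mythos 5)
Your proposal is correct and follows essentially the same route as the paper: both use the characterisation that a representation is proper exactly when $\leqslant_L$ and $\leqslant_R$ coincide, deduce the proper case directly from Theorem~\ref{thm:main}, and then invoke an order-preserving conversion from proper to unit representations (the paper cites Bogart and West~\cite{BW99} for this last step, where you cite Roberts; either reference works). Your extra remarks --- the short proof of the properness characterisation and the caveat that ``extends'' must be read up to rescaling in the unit case --- are sound but not needed beyond what the paper records.
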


Finally, we show that a statement similar to the one of Theorem~\ref{thm:main} is not true for $m$-trapezoid graphs in the case $m=1$ and $k=2$ (i.e. for squares of trapezoid graphs).

\begin{proposition}\label{prop:P5}
There is a trapezoid representation $R$ of the path $P_5$ such that for any trapezoid representation $R'$ of $P_5^2$, at least one order among the four orders $\leqslant_L^0$, $\leqslant_R^0$, $\leqslant_L^1$, $\leqslant_R^1$ induced by each of $R$ and $R'$, differs on $R$ and $R'$.
\end{proposition}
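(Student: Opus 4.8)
The plan is to exhibit one explicit trapezoid representation $R$ of $P_5=v_1v_2v_3v_4v_5$, to read off the four endpoint orders it induces, and then to show that this $4$-tuple of orders cannot be realized by any trapezoid representation of $P_5^2$. First note that $P_5^2$ is obtained from $P_5$ by adding exactly the three edges $v_1v_3$, $v_2v_4$, $v_3v_5$, so its non-edges are exactly $v_1v_4$, $v_1v_5$, $v_2v_5$, i.e.\ the surviving non-edges of $P_5$.

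I would build the argument on two elementary observations about trapezoid representations on lines $L_1,L_2$. (a) Two vertices are non-adjacent if and only if one trapezoid lies entirely to the left of the other on \emph{both} lines; in particular, for every non-edge $uv$ one of $u,v$ precedes the other in all four orders, and which one is determined by the orders. (b) Conversely, with prescribed endpoint orders one can always force two intervals to overlap on a given line by lengthening them, so the orders never force a non-adjacency — but they force $u\sim v$ whenever $u$ precedes $v$ in both orders of one line while $v$ precedes $u$ in both orders of the other. Now suppose $R'$ is a trapezoid representation of $P_5^2$ inducing the same four orders as $R$. Since $v_1v_4$, $v_1v_5$, $v_2v_5$ are non-edges of $P_5^2$ and the shared orders fix their orientation (the one inherited from $R$), in $R'$ the trapezoid of $v_1$ lies entirely left of those of $v_4$ and $v_5$, and that of $v_2$ entirely left of that of $v_5$, on both lines; these forced gaps drive the contradiction. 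Each new edge $v_1v_3$, $v_2v_4$, $v_3v_5$ must be witnessed by an overlap on some line, and the point — this is where the choice of $R$ enters — is that on each line the position of $v_3,v_4,v_5$ in the four orders together with these forced gaps pushes one of those pairs apart: for instance, if $v_4$ precedes $v_3$ in the left-endpoint order of a line, then on that line $v_1$'s right endpoint lies left of $v_4$'s left endpoint, hence left of $v_3$'s left endpoint, so $v_1$ and $v_3$ cannot overlap there. Designing $R$ so that every line blocks at least one of the three new edges, and so that the remaining options cannot be met simultaneously, gives the required contradiction.

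The main obstacle is the design of $R$ and the verification of this last point. The four orders cannot all coincide — for then $P_5^2$ is realizable with those orders by an ordinary interval representation — so $R$ must be genuinely two-dimensional, with the edges of $P_5$ distributed between the two lines and the induced orders chosen to be as rigid as possible (various single ``twists'' either fail to help or are outright incompatible with $R$ representing $P_5$, e.g.\ because $v_3v_4\in E(P_5)$ and $v_2$ lying entirely left of $v_4$ force $v_3$ before $v_4$ on whichever line carries $v_2v_3$). A convenient way to organize the verification is to note that a trapezoid representation with fixed endpoint orders realizes $H_1\cup H_2$, where $H_i$ is the interval graph carried by line $L_i$ with exactly the prescribed orders on that line, together with the pairs the orders force to be adjacent; so realizing $P_5^2$ with $R$'s orders amounts to covering the three new edges by two interval graphs that are subgraphs of $P_5^2$ and have exactly the prescribed orders on their respective lines — and one checks, a finite computation once $R$ is fixed, that $R$'s orders make this impossible. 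It is precisely this rigidity — no room to lengthen an interval without re-creating one of the forbidden non-edges — that has no analogue in the one-dimensional setting of Theorem~\ref{thm:main}.
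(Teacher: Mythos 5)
Your two structural observations are correct and are exactly the levers the paper's proof uses: a non-edge of $P_5^2$ forces one trapezoid entirely to the left of the other on \emph{both} lines (hence strictly earlier in all four orders), while each new edge of $P_5^2$ must be witnessed by an interval overlap on at least one line, and the forced gaps coming from the non-edges $v_1v_4$, $v_1v_5$, $v_2v_5$ can be played against the required overlaps for $v_1v_3$, $v_2v_4$, $v_3v_5$. The paper's argument is precisely such a chain: the non-edge $v_2v_5$ together with $3\leqslant_R^0 2$ kills the $L_0$-overlap for the edge $v_3v_5$, forcing it onto $L_1$; the same gap kills the $L_0$-overlap for $v_2v_4$, forcing it onto $L_1$ as well; and then the edge $v_1v_2$ together with the non-edge $v_1v_4$ cannot be accommodated on $L_1$.

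However, there is a genuine gap: the proposition is an existence statement, and you never exhibit the witness. Everything in your write-up is conditional on "designing $R$ so that every line blocks at least one of the three new edges, and so that the remaining options cannot be met simultaneously" --- which you yourself identify as "the main obstacle" and then leave unresolved, deferring to "a finite computation once $R$ is fixed" without fixing $R$. Since the entire content of the proposition is that such an $R$ exists (it is not a priori clear that any choice of orders is rigid enough --- your own observation (b) shows the orders by themselves never forbid an adjacency, so one must check that the required overlaps are mutually incompatible, not just individually obstructed), the proof is incomplete without the explicit representation and the case analysis. For the record, the paper takes $I_1^0=[0,1]$, $I_2^0=[6,7]$, $I_3^0=[4,5]$, $I_4^0=[10,11]$, $I_5^0=[8,9]$ on $L_0$ and $I_1^1=[4,5]$, $I_2^1=[3,4]$, $I_3^1=[8,9]$, $I_4^1=[6,7]$, $I_5^1=[12,13]$ on $L_1$, giving the orders $1,3,2,5,4$ on $L_0$ and $2,1,4,3,5$ on $L_1$, and then runs the three-step chain sketched above. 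A secondary, smaller imprecision: your reformulation of realizability as "covering the new edges by two interval graphs plus the pairs the orders force to be adjacent" is not an exact characterization, since a representation may also contain crossing adjacencies that are consistent with, but not forced by, the orders; the direct argument on a concrete $R'$ avoids this issue.
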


We remark that the proof of Theorem~\ref{thm:main} provides a polynomial-time algorithm for building the representation $R'$ from $R$ and $G$. Hence Theorem~\ref{thm:main} has (algorithmic) applications, see for example~\cite{algopaper}.


\section{Proofs}

\begin{proof}[Proof of Theorem~\ref{thm:main}]
We can assume that $G$ is connected and that in the representation $R$ there is no pair of intervals $I_x$ and $I_y$ with $r(I_x)=\ell(I_y)$ (otherwise we can modify $R$ so that it satisfies this property, without affecting $\leqslant_L$ and $\leqslant_R$).

For every vertex $x\in V(G^{k-1})$, we denote by $I_x$ the interval corresponding to $x$ in the representation $R$ of $G^{k-1}$. 
Assume first that there exists a vertex at distance $k$ of $x$ (in $G$) whose corresponding interval of $R$ has a left endpoint larger than $\ell(I_x)$. Let $u_x$ be such a vertex whose corresponding interval $I_{u_x}$ of $R$ has the largest left endpoint. We define $r_k(x)$ to be a point of the real line located after $\ell(I_{u_x})$ and before the next left or right endpoint in $R$ (if it exists).
In the case $u_x=u_y$ for two distinct vertices $x$ and $y$ such that $x\leqslant_R y$, we choose $r_k(x)$ and $r_k(y)$ such that $r_k(x)<r_k(y)$ if $r(I_x)<r(I_y)$ and $r_k(x)=r_k(y)$ if $r(I_x)=r(I_y)$.
If there is no interval $u_x$, we let $r_k(x)=r(I_x)$. In this case, each vertex $y$ whose corresponding interval of $R$ starts after $\ell(I_x)$ is at distance at most~$k-1$ of $x$, and no interval in $R$ starts after $r(I_x)$. 

Now, we build $R'$ as follows: each interval $I_x=[\ell(I_x),r(I_x)]$ is replaced by interval $I'_x=[\ell(I_x),r_k(x)]$. In other words, all the intervals of $R$ are extended to the right until being adjacent to the last interval at distance at most~$k$ in $G$, while locally preserving the order $\leqslant_R$ in the event of ties.

It is clear that $R$ and $R'$ induce the same order $\leqslant_L$ since we have not modified the left endpoints. Assume for contradiction that $R$ and $R'$ do not induce the same order $\leqslant_R$. Let $x$ and $y$ be two vertices. If $r_k(x)=r_k(y)$ we necessarily have $r(I_x)=r(I_y)$. Thus there exist two vertices $x$ and $y$ with $x\leqslant_R y$ in $R$, but $y\leqslant_R x$ in $R'$. In other words, $r(I_x)\leq r(I_y)$ but $r_k(x)>r_k(y)$. This implies in particular that $r(I_x)<r_k(x)$ and hence $u_x$ is well-defined. 
Moreover, by definition of $r_k(x)$, we cannot have $\ell(I_{u_x})\leq r_k(y)<r_k(x)$, therefore $r_k(y)<\ell(I_{u_x})$ and the distance $d_G(y,u_x)$ is at least $k+1$. Since $d_G(x,u_x)=k$, there is a vertex $z$ at distance~$1$ of $u_x$ in $G$ that is at distance $k-1$ of $x$ in $G$ (for example $z$ lies on a shortest path from $u_x$ to $x$ in $G$). Hence, $I_{z}$ intersects both $I_x$ and $I_{u_x}$ in $R$. But then, $I_y$ also intersects $I_{z}$, implying that $d_G(y,u_x)\leq d_G(y,z)+d_G(z,u_x)\leq k-1+1=k$, a contradiction. Therefore $R$ and $R'$ induce the same order $\leqslant_R$.

It remains to show that the interval graph $G'$ defined by $R'$ is exactly $G^k$, i.e. that (i) all edges of $G^k$ are contained in $G'$, and (ii) that every edge of $G'$ belongs to $G^k$.

Note that each interval $I'_x$ of $R'$ contains the interval $I_x$ of $R$, hence all the edges of $G^{k-1}$ are contained in $G'$. Hence, assuming that (i) is false, we have two vertices $x,y$ at distance exactly~$k$ in $G$ that are not adjacent in $G'$. Assume without loss of generality that $x\leqslant_R y$. But then, we have $r_k(x)>\ell(I_y)$ and therefore $I'_x$ and $I'_y$ do intersect in $R'$, a contradiction. Therefore (i) holds.

Now, assume that (ii) is false; then we have two vertices $x,y$ with $d_G(x,y)\geq k+1$ but $I'_x$ intersects $I'_y$. Without loss of generality assume that $x\leqslant_Ry$. Then we have $r_k(x)>\ell(I_y)$, and $x\neq u_x$. Let $z$ be a vertex at distance~$1$ of $x$ and at distance at most~$k-1$ of $u_x$ (for example $z$ lies on a shortest path from $x$ to $u_x$ in $G$). Then, $I_z$ intersects both $I_x$ and $I_{u_x}$ in $R$, which implies that $I_z$ intersects $I_y$ in $R$. Hence, we have $d_G(y,x)\leq d_G(y,z)+d_G(z,x)\leq k-1+1=k$, a contradiction. Hence (ii) holds and the proof of Theorem~\ref{thm:main} is complete.
\end{proof}

\medskip

\begin{proof}[Proof of Corollary~\ref{cor:unit}]
An interval representation is proper if and only if the two orders $\leqslant_L$ and $\leqslant_R$ are the same.

Let $G$ be a graph such that $G^{k-1}$ is a proper interval graph and let $R$ be a proper interval representation of $G^{k-1}$. By Theorem~\ref{thm:main}, $R$ can be extented to an interval representation $R'$ of $G^k$ inducing the same left and right endpoint orders. Thus $R'$ is necessarily a proper interval representation.

For unit interval representations, it is enough to note that any proper interval representation $R$ of a graph $H$ can be transformed into a unit interval representation $R^u$ of $H$ such that $R$ and $R^u$ induce the same left and right endpoint orders (see for example~\cite{BW99}).\end{proof}

\medskip

\begin{proof}[Proof of Proposition~\ref{prop:P5}]
Let $V(P_5)=\{1,2,3,4,5\}$ and consider the following trapezoid representation, $R$, of $P_5$. Each vertex $i$ ($1\leq i\leq 5$) corresponds to the trapezoid $T_i$ with corners $\ell(I_i^0),\ell(I_i^1),r(I_i^1),r(I_i^0)$, where the intervals on $L_0$ are $I_1^0=[0,1]$, $I_2^0=[6,7]$, $I_3^0=[4,5]$, $I_4^0=[10,11]$, $I_5^0=[8,9]$ and the intervals on $L_1$ are $I_1^1=[4,5]$, $I_2^1=[3,4]$, $I_3^1=[8,9]$, $I_4^1=[6,7]$, $I_5^1=[12,13]$. See Figure~\ref{fig:P5} for an illustration. Note that we have $1\leqslant_L^0 3\leqslant_L^0 2\leqslant_L^0 5\leqslant_L^0 4$, $2\leqslant_L^1 1\leqslant_L^1 4\leqslant_L^1 3\leqslant_L^1 5$, $\leqslant_L^0=\leqslant_R^0$ and $\leqslant_L^1=\leqslant_R^1$.

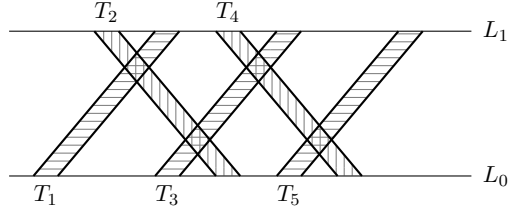
\begin{figure}[ht]
\centering
\scalebox{0.8}{\begin{tikzpicture}[scale=0.4]
\foreach \I in {0,5,10}
         {
           \fill[pattern color=gray, draw=white, pattern=horizontal lines] (\I,-3) -- (\I+5,3) -- (\I+1+5,3) -- (\I+1,-3);
         }

\foreach \I in {2.5,7.5}
         {
           \fill[pattern color=gray, draw=white, pattern=vertical lines] (\I,3) -- (\I+5,-3) -- (\I+1+5,-3) -- (\I+1,3);
         }

\foreach \I in {0,5,10}
         {
           \draw[line width=1pt] (\I,-3) -- (\I+5,3) (\I+1,-3) -- (\I+1+5,3);
         }

\foreach \I in {2.5,7.5}
         {
           \draw[line width=1pt] (\I,3) -- (\I+5,-3) (\I+1,3) -- (\I+1+5,-3); 
         }

\draw[line width=0.5pt] (-1,3) -- (18,3) 
      (-1,-3) -- (18,-3);
\node at (19,3) {$L_1$};
\node at (19,-3) {$L_0$};
\node at (0.5,-3.8) {$T_1$};
\node at (3,3.8) {$T_2$};
\node at (5.5,-3.8) {$T_3$};
\node at (8,3.8) {$T_4$};
\node at (10.5,-3.8) {$T_5$};
\end{tikzpicture}}
\label{fig:P5}
\caption{The trapezoid representation $R$ of graph $P_5$.}
\end{figure}

By contradiction, we assume that there exists a trapezoid representation $R'$ of $P_5^2$ inducing the same orders $\leqslant_L^0$, $\leqslant_R^0$, $\leqslant_L^1$ and $\leqslant_R^1$ as $R$. For each vertex $i$ ($1\leq i\leq 5$), denote by $T'_i$ the trapezoid corresponding to $i$ in $R'$, and by ${I'}_i^0$ and ${I'}_i^1$ the intervals of $L_0$ and $L_1$, respectively, belonging to $T_i$.

Since $3$ and $5$ are adjacent in $P_5^2$, we have $T'_3\cap T'_5\neq\emptyset$. Since $3\leqslant_L^0 5$, $3\leqslant_R^0 5$, $3\leqslant_L^1 5$ and $3\leqslant_R^1 5$, we have $\ell({I'}_5^1)<r({I'}_3^1)$ or $\ell({I'}_5^0)<r({I'}_3^0)$. Since $3\leqslant_R^0 2$ but $2$ and $5$ are not adjacent in $P_5^2$, we have $r({I'}_3^0)<r({I'}_2^0)<\ell({I'}_5^0)$. This implies that $\ell({I'}_5^1)<r({I'}_3^1)$. Since $2$ and $4$ are adjacent in $P_5^2$ and moreover $2\leqslant_L^0 4$, $2\leqslant_R^0 4$, $2\leqslant_L^1 4$ and $2\leqslant_R^1 4$, we must have $\ell({I'}_4^1)<r({I'}_2^1)$ or $\ell({I'}_4^0)<r({I'}_2^0)$. But since $r({I'}_2^0)<\ell({I'}_5^0)<\ell({I'}_4^0)$, we necessarily have $\ell({I'}_4^1)<r({I'}_2^1)$. Now, it suffices to observe that $1$ and $2$ are adjacent but $1$ and $4$ are non-adjacent in $P_5^2$, while $1\leqslant_L^0 3\leqslant_L^0 2$, $1\leqslant_R^0 3\leqslant_R^0 2$, and we must have $1\leqslant_L^1 2$ and $1\leqslant_R^1 2$. This makes it impossible to complete the representation $R'$, which is a contradiction.
\end{proof}

\end{document}